\newtheorem{thm}{Theorem}[section] 
\newtheorem{lem}{Lemma}[section]
\newtheorem{prop}{Proposition}[section]
\theoremstyle{definition}
\numberwithin{equation}{section}
\begin{document}

\title[Improving an inequality for the divisor function]{Improving an inequality for the divisor function}

\author[J. Lay]{Jeffrey P.S. Lay}
\address{Mathematical Sciences Institute, The Australian National University}
\email{jeffrey.lay@anu.edu.au}

\date{}

\begin{abstract}
We improve using elementary means an explicit bound on the divisor function due to Friedlander and Iwaniec. Consequently we modestly improve a result regarding a sieving inequality for Gaussian sequences.
\end{abstract}

\subjclass[2010]{11N36, 11N37, 11N56, 11N64}

\keywords{divisor function, small divisors, Gaussian sequences, sieve estimates}

\maketitle


\section{Introduction}

Let $\tau(n)$ be the number of divisors of $n$. While asymptotic estimates for weighted sums $\sum \tau(n)a_n$ are generally difficult to obtain, explicit bounds often suffice in applications. 

We shall consider the relationship between $\tau(n)$ and averages of $\tau(d)$ for small divisors $d$ of $n$. Landreau~\cite{Landreau1988} showed that for any integer $k\geq 2$ there exists a constant $C_k >0$ such that
\begin{equation} \label{e81}
	\tau(n) \leq C_k \sum_{\substack{d|n \\ d\leq n^{1/k}}} \big( 2^{\omega(d)} \tau(d) \big)^k, \qquad n\geq 1,
\end{equation}	
where $\omega(n)$ counts the number of distinct primes dividing $n$. We shall like to make the constants $C_k$ effective. Friedlander and Iwaniec~\cite{OperadeCribro} considered, {\em inter alia}, a weakened version of~\eqref{e81} for $k=4$, making use of the trivial bound $2^{\omega(n)} \leq \tau(n)$. They showed
\begin{equation} \label{e82}
	\tau(n) \leq C \sum_{\substack{d|n \\ d\leq n^{1/4}}} \tau(d)^8, \qquad n\geq 1,
\end{equation}
holds for $C=256$. Numerical evidence suggests this constant is far from optimal. In fact, it can be easily verified that~\eqref{e82} holds with $C=8$ for all $1 \leq n \leq 10^8$. Moreover, equality is attained for $733,133$ values of $n$ within this interval, these being the square-free numbers $n=p_1p_2p_3$ satisfying $n^{1/4}<\min(p_1,p_2,p_3)$. So for small $n$ it is certainly the case that $C=8$ is the best possible constant, with evidence suggesting this trend should continue as $n\to \infty$. Our aim is to investigate whether $C\leq 8$ is admissible for all $n$ sufficiently large, as well as whether the sum can be made sharper.

In this article we show that~\eqref{e82} indeed holds for $C=8$. In addition we improve on the exponent of $\tau(d)$ in the sum, which~\eqref{e81} suggests should be much smaller than $8$, at least for non square-free $n$. Our main result to reach this goal is the following.

\begin{thm} \label{t1}
Let $n \geq 1$. Then there exists $d\leq n^{1/4}$ with $d|n$ such that $\tau(n) \leq 8 \tau(d)^7$.
\end{thm}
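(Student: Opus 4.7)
The plan is to exhibit a suitable divisor $d$ by a greedy construction on the primes of $n$ taken in increasing order. Write $n = p_1^{a_1} p_2^{a_2} \cdots p_r^{a_r}$ with $p_1 < p_2 < \cdots < p_r$. Set $d_0 = 1$, and for each $i = 1, \ldots, r$ let $b_i$ be the largest integer with $0 \le b_i \le a_i$ and $d_{i-1} p_i^{b_i} \le n^{1/4}$, then put $d_i = d_{i-1} p_i^{b_i}$. Take $d = d_r$; by construction $d \le n^{1/4}$.

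First, I would extract the key consequence of the greedy choice: at any index $i$ with $b_i < a_i$, maximality forces $d_i p_i > n^{1/4}$, whence $p_i > n^{1/4}/d_i \geq n^{1/4}/d$. Writing $I = \{i : b_i < a_i\}$ and $\beta = n^{1/4}/d$, every prime indexed by $I$ exceeds $\beta$. Combined with the identity $\prod_{i \in I} p_i^{a_i - b_i} = n/d$, this yields the crucial excess bound
\[
\sum_{i \in I}(a_i - b_i) \;<\; \frac{\log(n/d)}{\log \beta}.
\]
In the extreme case $d = 1$ (all $b_i = 0$) this reads $\sum_{i} a_i < 4$, so a direct enumeration of partitions shows $\tau(n) = \prod_i(a_i+1) \leq 8$, matching the extremal configuration $n = p_1 p_2 p_3$ with every $p_i > n^{1/4}$ (where equality is attained).

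Second, for the main inequality $\tau(n)/\tau(d) = \prod_{i \in I}(a_i+1)/(b_i+1) \leq 8\,\tau(d)^6$, I would write $(a_i+1)/(b_i+1) = 1 + (a_i - b_i)/(b_i+1)$ and use the trivial bound $(a_i+1)/(b_i+1) \leq 2(a_i - b_i)$ (valid when $a_i > b_i \geq 0$). An AM-GM argument on the excesses $\{a_i - b_i\}_{i \in I}$, together with the sum bound above and the factorisation $\tau(d) = \prod_i(b_i+1)$, should then close the estimate.

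The main obstacle is calibrating the AM-GM estimate sharply enough, since the constant $8$ is known to be tight. The delicate regime is when $d$ is of intermediate size: here the uncovered excess bound degrades while $\tau(d)^6$ grows, and the two effects must be balanced against each other via the ordering of primes. I expect the proof to require splitting into a small number of cases based on the size of $d$ relative to $n^{1/4}$ and the structure of the set $I$, with the $d = 1$ case as described above serving as the sharp extremal benchmark.
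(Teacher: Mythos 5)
There is a genuine gap: the greedy divisor you build by scanning the primes in \emph{increasing} order does not in general satisfy $\tau(n)\le 8\tau(d)^7$, so no calibration of the AM--GM step can close the argument --- the inequality is simply false for that choice of $d$. Take $n=2^a p_1p_2\dotsm p_t$ with $3\le p_1<\dots<p_t$ odd primes and $a$ chosen minimally with $2^{3a}>p_1\dotsm p_t$ (equivalently $2^a>n^{1/4}$); for instance let the $p_i$ be the first $100$ odd primes, so that $a\approx 245$. Your first greedy step then takes $b_1=[\tfrac14\log_2 n]$ and $d_1=2^{b_1}>n^{1/4}/2$; since every remaining prime is at least $3$, no further prime can be adjoined, and the construction terminates with $d=2^{b_1}$ and $\tau(d)=b_1+1\le\tfrac14\log_2 n+1$. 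In the numerical instance this gives $8\tau(d)^7<8\cdot 250^7<2^{60}$, whereas $\tau(n)=(a+1)2^t>2^{100}$. The failure is already visible inside your own estimates: here $\beta=n^{1/4}/d\in(1,2]$, so $\log\beta$ is essentially zero and the ``crucial excess bound'' $\sum_{i\in I}(a_i-b_i)<\log(n/d)/\log\beta$ says nothing, while $\tau(d)^6\ll(\log n)^6$ cannot absorb an excess that is a positive proportion of $\log n/\log\log n$. The correct divisor for such $n$ is not the greedy one but, say, $p_1\dotsm p_{[t/4]}$, which ignores the power of $2$ entirely. (Your $d=1$ analysis is fine; it is the general step that breaks.)

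The moral is that the chosen divisor must capture a constant fraction of the \emph{distinct} prime factors of $n$, not merely exhaust the multiplicative budget $n^{1/4}$; a greedy scan in increasing order lets one small prime of high multiplicity crowd out many larger primes. The paper avoids this by partitioning the primes of $n$ according to their exponents ($1$, $2$, $3$, or $\ge 4$), choosing within each class either the smallest quarter of its primes or the power $p^{[a/4]}$ when $a\ge4$ --- a choice that always fits under the fourth root of that class's contribution and multiplies $\tau$ up by the right amount via the inequalities $7[t/4]\ge t$ and $([t/4]+1)^4\ge 2(t+1)$ --- and then combining the classes by a short case analysis to control the constant $8$. If you wish to retain a greedy flavour, the atoms must be ordered so that distinct primes are preferred over higher powers of primes already included; as stated, your ordering does exactly the wrong thing.
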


We shall also show that the constant $C$ in~\eqref{e82} must satisfy $C\geq 8$. Thus we deduce

\begin{thm} \label{t2}
We have 
\begin{equation*}
	\tau(n) \leq 8 \sum_{\substack{d|n \\ d\leq n^{1/4}}} \tau(d)^7, \qquad n\geq 1,
\end{equation*}
the constant $8$ being best possible for all $n$.
\end{thm}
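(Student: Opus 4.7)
The plan has two parts, one for the inequality and one for the optimality of the constant $8$.

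For the inequality, I would simply invoke Theorem \ref{t1}. Given $n \geq 1$, it produces a particular divisor $d_0 \leq n^{1/4}$ of $n$ satisfying $\tau(n) \leq 8\tau(d_0)^7$. Since every term $\tau(d)^7$ in the sum on the right-hand side is non-negative, we have
\[
	8\sum_{\substack{d|n \\ d \leq n^{1/4}}} \tau(d)^7 \;\geq\; 8\tau(d_0)^7 \;\geq\; \tau(n),
\]
which is the claimed bound. This step is immediate and involves no new work.

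For the optimality, I would exhibit an infinite family of integers $n$ where equality holds, so that no constant smaller than $8$ can suffice. Following the heuristic already highlighted in the introduction, I would take $n = p_1 p_2 p_3$ with distinct primes $p_1 < p_2 < p_3$ chosen so that $p_1 > n^{1/4}$, equivalently $p_1^3 > p_2 p_3$. For such $n$ we have $\tau(n) = 8$, while the only divisor of $n$ not exceeding $n^{1/4}$ is $d = 1$, so $\sum_{d | n,\, d \leq n^{1/4}} \tau(d)^7 = 1$. The inequality $\tau(n) \leq C \sum \tau(d)^7$ therefore forces $C \geq 8$.

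The only non-routine point is producing such triples $(p_1, p_2, p_3)$; I would do this by taking three primes in a short interval $[x, x(1+\varepsilon)]$ for $x$ large and $\varepsilon$ small enough that $p_1^3 > p_2 p_3$ is guaranteed. Existence of infinitely many such triples is standard (it follows, for instance, from Bertrand's postulate applied twice, which already produces triples with $p_3 < 4p_1$, easily making $p_1^3 > p_2 p_3$ for $p_1$ large). The main obstacle, such as it is, is merely checking the arithmetic of the condition $p_1^3 > p_2 p_3$; everything else is automatic from Theorem \ref{t1} and the observation about the divisor structure of $n = p_1 p_2 p_3$.
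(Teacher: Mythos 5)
Your proposal is correct and follows essentially the same route as the paper: the inequality is the immediate consequence of Theorem~\ref{t1} (the sum dominates the single term $\tau(d_0)^7$), and the optimality argument is exactly the paper's Proposition~\ref{p21} specialised to $k=4$, which likewise uses Bertrand's postulate to build square-free $n=p_1p_2p_3$ with $p_1>n^{1/4}$ so that $\tau(n)=8$ while the divisor sum collapses to $f(1)=1$. Nothing is missing.
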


The consideration of~\eqref{e82} by Friedlander and Iwaniec in~\cite{OperadeCribro} led to the study of sieving inequalities for Gaussian sequences. We shall see in \S{6} how Theorem~\ref{t2} may be used to modestly improve one of their results~\cite{FriedlanderIwaniec2007}.

\subsection*{Acknowledgements}

The author would like to thank Tim Trudgian for suggesting the problem and for providing awesome feedback. The author is supported by an Australian Government Research Training Program (RTP) Scholarship.

\section{A lower bound}

Our first result describes a natural lower bound for the constant $C$ in~\eqref{e82}. This bound arises from the consideration of a particular set of square-free numbers. In fact, the result extends to the general case~\eqref{e81}.

\begin{prop} \label{p21}
Fix an integer $k\geq 2$. Then for any multiplicative function $f:\mathbb N \to \mathbb R$ we have
\begin{equation*}
	\limsup_{n\to \infty} \, \tau(n) \Bigg( \sum_{\substack{d|n \\ d\leq n^{1/k}}} f(d) \Bigg)^{-1} \geq 2^{k-1}.
\end{equation*}
\end{prop}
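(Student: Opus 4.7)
The plan is to exhibit arbitrarily large integers $n$ whose only divisor $\leq n^{1/k}$ is $d = 1$ yet which satisfy $\tau(n) = 2^{k-1}$. For any such $n$ the quotient equals $\tau(n)/f(1) = 2^{k-1}$, using that $f(1) = 1$ for any nontrivial multiplicative function $f$, and this forces the limsup to be at least $2^{k-1}$.

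Concretely, I would take $n = p_1 p_2 \cdots p_{k-1}$, where $p_1 < p_2 < \cdots < p_{k-1}$ are the $k-1$ smallest primes exceeding a parameter $M$. Then $\tau(n) = 2^{k-1}$ automatically, and the condition that every proper divisor of $n$ exceeds $n^{1/k}$ amounts to $p_1 > n^{1/k}$, equivalently
\[
    p_2 p_3 \cdots p_{k-1} < p_1^{k-1}.
\]
By the prime number theorem (or, more crudely, iterated Bertrand) one has $p_j/p_1 \to 1$ as $M \to \infty$ for each fixed $j \leq k-1$, so the left-hand side is $O(p_1^{k-2})$ while the right-hand side is $p_1^{k-1}$; hence the inequality holds for all sufficiently large $M$.

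For such $n$ the divisor sum collapses to the single term $f(1) = 1$, so the quotient is \emph{exactly} $2^{k-1}$; letting $M \to \infty$ sends $n \to \infty$ along this sequence, which yields the desired lower bound on the limsup. I regard the prime-in-short-interval input as the one non-bookkeeping step, though it is entirely standard, and no further obstacle arises: the multiplicativity of $f$ enters only through $f(1) = 1$.
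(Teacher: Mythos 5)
Your proposal is correct and is essentially the paper's own argument: both take $n$ to be a product of $k-1$ primes clustered just above a large threshold, so that $p_1 > n^{1/k}$, the divisor sum collapses to $f(1)=1$, and $\tau(n)=2^{k-1}$. The only cosmetic difference is that the paper makes the clustering explicit via iterated Bertrand ($p_i < 2^{i-1}p_1$ with $p_1 > 2^{(k-1)(k-2)/2}$), whereas you invoke the same bound asymptotically; note that iterated Bertrand gives $p_2\cdots p_{k-1} = O_k(p_1^{k-2})$ but not $p_j/p_1 \to 1$, though only the former is needed.
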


\begin{proof}
Take a prime $p_1> 2^{(k-1)(k-2)/2}$ and choose, using Bertrand's postulate, primes $p_2<p_3<\dots <p_{k-1}$ such that $p_1<p_2<2 p_1$ and $p_i<2^{i-1} p_1$ for $3\leq i \leq k-1$. Then
\begin{equation*}
	p_1^{k-1}>2^{(k-1)(k-2) \over 2} \times p_1^{k-2}=\prod_{i=2}^{k-1} 2^{i-1} p_1 >p_2p_3\dotsm p_{k-1}.
\end{equation*}
Consider now $n=p_1p_2\dotsm p_{k-1}$. We see that $p_1 > n^{1/k}$, whence there are no non-trivial divisors $d$ of $n$ with $d\leq n^{1/k}$. So for such an $n$ we have $\tau(n)=2^{k-1}$ and
\begin{equation*}
	\sum_{\substack{d|n \\ d\leq n^{1/k}}} f(d) = f(1) = 1.
\end{equation*}
\end{proof}



\section{Some upper bounds}

We now turn our attention to proving Theorem~\ref{t1}. The aim is to choose for any $n$ a divisor $d\leq n^{1/4}$ for which $\tau(d)$ is as large as possible. In this section we demonstrate this procedure for $n$ with certain prime factorisations.

We shall make use of the following elementary inequalities. We write $[x]$ for the integer part of $x$.

\begin{lem} \label{l9}
For all integers $t \geq 4$ we have $7 [t/4] \geq t$ and $([t/4]+1)^4 \geq 2(t+1)$.
\end{lem}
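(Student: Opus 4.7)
The plan is to parametrise $t$ by the division algorithm: write $t = 4q + r$ where $q = [t/4]$ and $r \in \{0,1,2,3\}$. Since $t \geq 4$, we have $q \geq 1$. Both inequalities then reduce to elementary statements in $q$ and $r$, which I can handle by a quick case analysis (or by bounding the worst case).

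For the first inequality $7[t/4] \geq t$, substituting gives $7q \geq 4q + r$, equivalently $3q \geq r$. Since $q \geq 1$ and $r \leq 3$, this is immediate.

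For the second inequality $([t/4]+1)^4 \geq 2(t+1)$, substitution gives $(q+1)^4 \geq 8q + 2r + 2$. The right-hand side is maximised at $r = 3$, where it equals $8q + 8 = 8(q+1)$. Dividing by the positive quantity $q+1$, it suffices to check $(q+1)^3 \geq 8$, i.e.\ $q \geq 1$, which again holds by the hypothesis $t \geq 4$.

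There is no real obstacle here; this is a purely computational lemma that the author has isolated because the expressions $7[t/4]$ and $([t/4]+1)^4$ will appear when bounding $\tau(d)^7$ and $\tau(n)/\tau(d)$ against $\tau(n)$ in the divisor-choice argument for Theorem~\ref{t1}. The only thing to be a little careful about is ensuring that the bound $t \geq 4$ (rather than $t \geq 1$) is what is actually needed: with $t \leq 3$ the first inequality would already fail (e.g.\ $t=3$ gives $7 \cdot 0 < 3$), so the hypothesis cannot be relaxed.
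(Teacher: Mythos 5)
Your proof is correct and is essentially identical to the paper's: both write $t=4q+r$ (the paper phrases this as $4i\leq t\leq 4i+3$), dispose of the first inequality by $7q\geq 4q+3\geq t$, and reduce the second to $(q+1)^4\geq 8(q+1)$, which holds since $q\geq 1$. No issues.
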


\begin{proof}
Let $i\geq 1$ be the unique integer such that $4i \leq t \leq 4i+3$. For the first inequality we simply see that $7[t/4]=7i \geq 4i+3 \geq t$. For the second we have $([t/4]+1)^4 = (i+1)^4 \geq 8(i+1) = 2(4i+3)+2 \geq 2t+2$.
\end{proof}

We consider the various cases pertaining to how prime powers appear in the prime factorisation of $n$. Our first lemma deals with the case when all exponents are at least~$4$.

\begin{lem} \label{l70}
Suppose $n=p_1^{a_1}p_2^{a_2} \dotsm p_t^{a_t}$ with $a_i \geq 4$ for all $1\leq i \leq t$. Then there exists $d\leq n^{1/4}$ with $d|n$ such that $\tau(n)\leq 2^{-t} \tau(d)^4$. 
\end{lem}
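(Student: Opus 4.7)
The natural candidate is to take $d$ to be the ``fourth-root'' divisor, defined by reducing each exponent to its integer quarter. That is, set
\begin{equation*}
	d = \prod_{i=1}^{t} p_i^{[a_i/4]}.
\end{equation*}
Clearly $d \mid n$, and since $[a_i/4] \leq a_i/4$ we immediately get $d \leq \prod p_i^{a_i/4} = n^{1/4}$, so the divisor constraint is automatic.

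The inequality $\tau(n) \leq 2^{-t}\tau(d)^4$ is multiplicative in the prime-power components: writing $\tau(n) = \prod_i (a_i + 1)$ and $\tau(d) = \prod_i ([a_i/4] + 1)$, the claim rearranges to
\begin{equation*}
	\prod_{i=1}^{t} 2(a_i + 1) \leq \prod_{i=1}^{t} \bigl( [a_i/4] + 1 \bigr)^4.
\end{equation*}
Hence it suffices to verify the single-prime inequality $2(a+1) \leq ([a/4]+1)^4$ for each $a = a_i \geq 4$, and take the product.

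This single-prime inequality is precisely the second inequality in Lemma~\ref{l9}, which was stated for all integers $t \geq 4$ (here playing the role of $a_i$). So the proof is essentially just: choose the right $d$, observe that the size condition and the divisor-count inequality both split as products indexed by the primes of $n$, and invoke Lemma~\ref{l9} on each factor. The main (mild) point to check carefully is that the factor $2^{-t}$ is correctly accounted for: one factor of $2$ is distributed to each prime, matching the ``$2(a+1)$'' form on the left of the Lemma~\ref{l9} bound. There is no real obstacle here; the content of the lemma has been pre-packaged into Lemma~\ref{l9}, so the present lemma is basically a bookkeeping exercise.
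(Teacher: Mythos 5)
Your proposal is correct and matches the paper's proof exactly: the same choice $d=\prod_i p_i^{[a_i/4]}$, the same observation that $d\leq n^{1/4}$, and the same prime-by-prime application of the second inequality of Lemma~\ref{l9} to obtain $\tau(d)^4\geq 2^t\tau(n)$.
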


\begin{proof}
We let $d = \prod_{i=1}^t p_i^{[a_i/4]}$. Then $d \leq n^{1/4}$ and we have by Lemma~\ref{l9}
\begin{equation*}
	\tau(d)^4 = \prod_{i=1}^t \bigg( \bigg[{a_i \over 4} \bigg] + 1 \bigg)^4 \geq 2^t \prod_{i=1}^t (a_i+1) = 2^t \tau(n).
\end{equation*}
\end{proof}

We now consider the cases when all prime powers appearing in the prime factorisation of $n$ occur with exponent $k$ for $k\in \{1,2,3\}$.

\begin{lem} \label{l71}
	Suppose $n=p_1p_2 \dotsm p_t$ with $p_1<p_2<\dots <p_t$. Then there exists $d \leq n^{1/4}$ with $d|n$ such that
		\begin{equation*}
			\tau(n) \leq 
				\begin{cases}
					2^t \tau(d) & \text{if } t \in 								\{1,2,3\}, \\
					\tau(d)^7 & \text{if } t \geq 	4.
				\end{cases}
		\end{equation*}
\end{lem}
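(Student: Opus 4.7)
The statement splits naturally into the two cases $t\in\{1,2,3\}$ and $t\geq 4$. Since $n$ is square-free, $\tau(n)=2^t$, so the whole lemma reduces to exhibiting a divisor $d\mid n$ with $d\le n^{1/4}$ and $\tau(d)$ large enough.

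The case $t\in\{1,2,3\}$ is essentially trivial: take $d=1$. Then $d\le n^{1/4}$ automatically and the required inequality $\tau(n)=2^t \le 2^t\tau(d)$ is an equality.

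For $t\ge 4$ my plan is to keep the smallest primes and build
\[
d = p_1 p_2 \cdots p_s, \qquad s=[t/4].
\]
With this choice, $\tau(d)=2^s$, and the desired bound $\tau(n)\le\tau(d)^7$ becomes $2^t\le 2^{7s}$, i.e.\ $t\le 7[t/4]$, which is precisely the first inequality of Lemma~\ref{l9}. So the arithmetic side of the proof is free; the only real content is verifying the size constraint $d\le n^{1/4}$.

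To establish $d^4\le n$, equivalently $(p_1\cdots p_s)^3\le p_{s+1}\cdots p_t$, I would use the hypothesis $4s\le t$ (which follows from $s=[t/4]$) to partition the indices $s+1,\dots,4s$ into three blocks of length $s$ and pair them off against $p_1,\dots,p_s$. Concretely, for each $j=1,\dots,s$ we have $p_j\le p_{s+j}$, $p_j\le p_{2s+j}$, $p_j\le p_{3s+j}$ because the primes are strictly increasing. Multiplying these $3s$ inequalities yields
\[
(p_1 p_2 \cdots p_s)^3 \le (p_{s+1}\cdots p_{2s})(p_{2s+1}\cdots p_{3s})(p_{3s+1}\cdots p_{4s}) \le p_{s+1}\cdots p_t,
\]
which is exactly what is needed. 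Multiplying both sides by $p_1\cdots p_s$ gives $d^4\le n$.

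The only slightly delicate point is making sure the chunking actually works, i.e.\ that $4s\le t$; this is immediate from $s=[t/4]$, but it is the step where the assumption $t\ge 4$ is really used (otherwise $s=0$ and there are no primes to pick). Everything else is book-keeping, and the bound $\tau(d)^7\ge\tau(n)$ drops out of Lemma~\ref{l9} without further work.
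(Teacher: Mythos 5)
Your proof is correct and follows exactly the same route as the paper: take $d=1$ for $t\in\{1,2,3\}$ and $d=p_1p_2\cdots p_{[t/4]}$ for $t\geq 4$, with Lemma~\ref{l9} supplying $2^{7[t/4]}\geq 2^t$. The only difference is that you spell out the verification of $d\leq n^{1/4}$ (pairing each of $p_1,\dots,p_s$ with three larger prime factors of $n$), a step the paper asserts without proof; your justification of it is sound.
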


\begin{proof}
Firstly let $t \in \{1,2,3\}$ be fixed. In each of these cases we let $d=1$. Then $2^t \tau(d) = \tau(n)$. 

On the other hand, if $t\geq 4$ we take $d=p_1p_2\dots p_{[t/4]}$. Then $d\leq n^{1/4}$ and we have, by Lemma~\ref{l9}, $\tau(d)^7 = 2^{7\times {[t/4]}} \geq 2^t = \tau(n)$.
\end{proof}

\begin{lem} \label{l72}
Suppose $n=p_1^2 p_2^2 \dots p_t^2$ with $p_1<p_2<\dots <p_t$. Then there exists $d\leq n^{1/4}$ with $d|n$ such that
\begin{equation*}
	\tau(n) \leq 
		\begin{cases}
			3\tau(d) & \text{if } t=1, \\
			2^{-2} \tau(d)^7 & \text{if } t \in \{2,3\}, \\
			\tau(d)^7 & \text{if } t \geq 4.
		\end{cases}
\end{equation*}
\end{lem}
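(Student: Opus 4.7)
The plan is to split on $t$ and, in each range, produce an explicit divisor $d$ of $n$ with $d \leq n^{1/4} = (p_1 p_2 \cdots p_t)^{1/2}$ that makes the target inequality fall out immediately. The structure mirrors that of Lemmas~\ref{l70} and~\ref{l71}.

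For the small cases I would handle $t = 1$ by setting $d = 1$, giving $\tau(n) = 3 = 3\tau(d)$ with no further work. For $t \in \{2, 3\}$ I would take $d = p_1$; the size bound $p_1 \leq (p_1 p_2 \cdots p_t)^{1/2}$ collapses to $p_1 \leq p_2 \cdots p_t$, which is immediate from $p_1 < p_2$, and then $2^{-2}\tau(d)^7 = 32$ dominates $\tau(n) = 3^t \leq 27$.

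The substantive case is $t \geq 4$. Here, in analogy with the construction in Lemma~\ref{l71}, my proposal is to take
\[
d = (p_1 p_2 \cdots p_s)^2, \qquad s = [t/4],
\]
so that $\tau(d) = 3^s$. The key inequality $\tau(d)^7 \geq \tau(n)$ then reads $3^{7[t/4]} \geq 3^t$, which is exactly the first bound supplied by Lemma~\ref{l9}.

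The step I expect to need the most attention is the size constraint $d \leq n^{1/4}$, which after squaring is equivalent to
\[
p_1^3 p_2^3 \cdots p_s^3 \leq p_{s+1} p_{s+2} \cdots p_t.
\]
Since $s = [t/4]$ forces $4s \leq t$, the right-hand side carries at least $3s$ primes, each strictly larger than every prime on the left. A straightforward pairing argument, matching each of the $3s$ left-hand factors with one of $p_{s+1}, \ldots, p_{4s}$ and discarding any extra factors on the right, then closes the bound.
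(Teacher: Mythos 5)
Your proposal is correct and takes essentially the same route as the paper: the same divisors $d=1$ for $t=1$, $d=p_1$ for $t\in\{2,3\}$, and $d=p_1^2\cdots p_{[t/4]}^2$ for $t\geq 4$, with the final case reduced to the inequality $7[t/4]\geq t$ of Lemma~\ref{l9}. The only difference is that you explicitly verify the size constraint $d\leq n^{1/4}$ (via the pairing of $p_1^3\cdots p_s^3$ against $p_{s+1}\cdots p_{4s}$), which the paper leaves implicit.
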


\begin{proof}
If $t=1$ we let $d=1$. Then $3 \tau(d) = \tau(p_1^2)=\tau(n)$. 

Next suppose $t \in \{2,3\}$. In these cases take $d=p_1$, whence $\tau(d)^7 = 2^7 > 2^2 \times 3^3 \geq 2^2 \tau(n)$.

Finally suppose $t\geq 4$. Take $d=p_1^2p_2^2 \dotsm p_{[t/4]}^2$. Then $\tau(d)^7 = 3^{7 \times [t/4]} \geq 3^t = \tau(n)$.
\end{proof}

\begin{lem} \label{l73}
Suppose $n=p_1^3 p_2^3 \dotsm p_t^3$ with $p_1<p_2<\dots<p_t$. Then there exists $d\leq n^{1/4}$ with $d|n$ such that
\begin{equation*}
	\tau(n) \leq
		\begin{cases}
			4 \tau(d) & \text{if } t=1, \\
			2^{-3} \tau(d)^7 & \text{if } t=2, \\
			2^{-5} \tau(d)^7 & \text{if } t=3, \\
			\tau(d)^7 & \text{if } t\geq 4.
		\end{cases}
\end{equation*}
\end{lem}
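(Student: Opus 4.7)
The plan is to follow the case-by-case pattern of Lemmas~\ref{l71} and~\ref{l72}, exhibiting in each case an explicit divisor $d \leq n^{1/4}$ of $n$ and verifying the stated inequality by direct computation.

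For the small cases I would proceed as follows. When $t=1$, take $d=1$, so that $\tau(n)=4=4\tau(d)$ with equality. When $t=2$, take $d=p_1$; since $p_1 \leq p_2^3$ we have $p_1^4 \leq p_1^3 p_2^3 = n$, so $d \leq n^{1/4}$, and $2^{-3}\tau(d)^7 = 2^{-3}\cdot 2^7 = 16 = \tau(n)$. When $t=3$, take $d=p_1 p_2$; the bound $p_1 p_2 < p_3^2 \leq p_3^3$ gives $(p_1 p_2)^4 \leq (p_1 p_2 p_3)^3 = n$, so again $d \leq n^{1/4}$, and $2^{-5}\tau(d)^7 = 2^{-5}\cdot 4^7 = 2^9 \geq 2^6 = \tau(n)$.

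For the main case $t \geq 4$, the natural analogue of the earlier constructions is $d = (p_1 p_2 \dotsm p_{[t/4]})^3$. Setting $k=[t/4]$, we have $\tau(d)^7 = 4^{7k}$ and $\tau(n)=4^t$, so the inequality $\tau(d)^7 \geq \tau(n)$ reduces to $7[t/4]\geq t$, which is the first inequality of Lemma~\ref{l9}. To verify $d\leq n^{1/4}$ it suffices to establish $(p_1\dotsm p_k)^3 \leq p_{k+1}\dotsm p_t$. Since $p_j > p_k$ for every $j>k$, and since $t-k \geq 3k$ (because $t \geq 4[t/4]$), the right-hand side is at least $p_k^{t-k} \geq p_k^{3k}$, which in turn dominates $(p_1\dotsm p_k)^3 \leq p_k^{3k}$.

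No step is genuinely hard here; the whole argument is elementary and parallels the previous two lemmas. The only mild obstacle is the choice of $d$ for $t=3$: the obvious attempt $d=p_1$ yields only $2^{-5}\tau(d)^7 = 4 < 64 = \tau(n)$, so one is forced up to $d=p_1 p_2$ to supply enough divisor-function mass, and this is where the strict ordering $p_1 < p_2 < p_3$ of the primes is needed to keep $d$ below the threshold $n^{1/4}$.
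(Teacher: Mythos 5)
Your proof is correct and follows essentially the same case-by-case strategy as the paper; the only difference is that for $t=3$ the paper takes $d=p_1^2$ (giving $2^{-5}\cdot 3^7 > 4^3\cdot 2^5$... i.e.\ $3^7 > 2^5\tau(n)$) whereas you take $d=p_1p_2$, and both choices satisfy $d\leq n^{1/4}$ and the stated bound. Your explicit verification that $d\leq n^{1/4}$ in the $t\geq 4$ case is a detail the paper leaves implicit, and it is carried out correctly.
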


\begin{proof}
As before, if $t=1$ let $d=1$, whence $4 \tau(d) = \tau(n)$. 

If $t=2$ we take $d=p_1$, which gives $\tau(d)^7 = 2^7 = 2^3 \tau(n)$. 

Next, if $t=3$ let $d=p_1^2$. Then $\tau(d)^7 = 3^7 > 2^5 \times 4^3 = 2^5 \tau(n)$.

Finally suppose $t\geq 4$. Take $d=p_1^3p_2^3 \dotsm p_{[t/4]}^3$, whence $\tau(d)^7 = 4^{7\times [t/4]} \geq 4^t = \tau(n)$.
\end{proof}

We are now ready to combine these estimates to prove Theorem~\ref{t1}.

\section{Proof of Theorem~\ref{t1}}

Let $n\geq 1$ and consider the unique prime factorisation of $n$. We group the prime powers according to their exponents: for each $i \in \{1,2,3\}$ let $m_i$ be the product of those occurring with exponent $i$ and let $l$ be the product of those with exponent at least $4$. Henceforth the relations $m_i=1$ and $l=1$ will be understood to mean that no primes of the corresponding form divide $n$. 

Write $n=m_1m_2m_3l$. First observe by Lemma~\ref{l70} that there exists a divisor $d_l$ of $l$ with $d_l\leq l^{1/4}$ for which
\begin{equation} \label{e678}
	\tau(n) = \tau(m_1m_2m_3) \tau(l) \leq \tau(m_1m_2m_3) \tau(d_l)^7.
\end{equation}
Thus to prove our theorem it suffices to consider those $n$ whose prime factorisations consist solely of prime powers with exponents strictly less than $4$. That is, if for each such $n=m_1m_2m_3$ we can find a divisor $d\leq n^{1/4}$ with $\tau(n) \leq 8 \tau(d)^7$ then by~\eqref{e678} we are done.

In each of the following cases the numbers $d_1,d_2,d_3$ are chosen as per Lemmas~\ref{l71},~\ref{l72}, and ~\ref{l73}. Note that these satisfy $d_i | m_i$ and $d_i\leq m_i^{1/4}$. Moreover, if $m_i=1$ we may choose $d_i=1$.

\begin{enumerate}[(I)]
\item Let $m_1\geq 1$.
	\begin{enumerate}[(i)]
		\item If $\omega(m_2) \in \{2,3\}$ then $\tau(n) \leq 8 \tau(d_1)^7 \times 2^{-2} \tau(d_2)^7 \times 4 \tau(d_3)^7 \leq 8 \tau(d_1d_2d_3)^7$.
		\item If $\omega(m_3) \in \{2,3\}$ then $\tau(n) \leq 8 \tau(d_1)^7 \times 3\tau(d_2)^7 \times 2^{-3} \tau(d_3)^7 \leq 3 \tau(d_1d_2d_3)^7$.
	\end{enumerate}
Henceforth we only consider the cases $m_2,m_3=1$ and $\omega(m_2),\omega(m_3) \in \mathbb N \setminus \{2,3\}$.

\item Suppose $m_1=1$ or $\omega(m_1)\geq 4$. 
	\begin{enumerate}[(i)]
		\item If at least one of $\omega(m_2)\geq 4$ or $\omega(m_3)\geq 4$ holds then $\tau(n) \leq \tau(d_1)^7 \times 4 \tau(d_2)^7 \times \tau(d_3)^7 = 4 \tau(d_1d_2d_3)^7$.
		\item On the other hand, suppose $\omega(m_2)=\omega(m_3)=1$. In this case write $n=m_1p_1^2p_2^3$. Let $d'=\min(p_1,p_2) \leq (p_1^2p_2^3)^{1/4}$. Then $\tau(d')^7=2^7 > \tau(p_1^2p_2^3)$ and so $\tau(n) < \tau(d_1)^7 \times \tau(d')^7 \leq \tau(d_1d')^7$.
	\end{enumerate}

\item Suppose $\omega(m_1)=1$. 
	\begin{enumerate}[(i)]
		\item If at least one of $\omega(m_2)\geq 4$ or $\omega(m_3)\geq 4$ holds then $\tau(n) \leq 2\tau(d_1)^7 \times 4\tau(d_2)^7 \times \tau(d_3)^7 \leq 8\tau(d_1d_2d_3)^7$.
		\item On the other hand, suppose $\omega(m_2)=\omega(m_3)=1$. Write $n=m_1 p_1^2p_2^3$. Let $d'=\min(p_1,p_2) \leq (p_1^2p_2^3)^{1/4}$. Then $\tau(d')^7 > \tau(p_1^2p_2^3)$ and so $\tau(n) < 2\tau(d_1) \times \tau(d')^7 \leq 2 \tau(d_1d')^7$.
	\end{enumerate}

\item Suppose $\omega(m_1)=2$. 
	\begin{enumerate}[(i)]
		\item If $\omega(m_2)\geq 4$ and $\omega(m_3)\geq 4$ then $\tau(n) \leq 4 \tau(d_1) \times \tau(d_2)^7 \times \tau(d_3)^7 \leq 4 \tau(d_1d_2d_3)^7$.
		\item If $\omega(m_2)=1$ and $\omega(m_3)\geq 4$ write $n=p_1p_2p_3^2 m_3$. Let $d'=\min(p_1,p_2,p_3)\leq (p_1p_2p_3^2)^{1/4}$. Then $\tau(d')^7 > \tau(p_1p_2p_3^2)$ and so $\tau(n) < \tau(d')^7 \times \tau(d_3)^7 = \tau(d'd_3)^7$.
		\item If $\omega(m_2)\geq 4$ and $\omega(m_3)=1$ write $n=p_1p_2 p_3^3 m_2$. Let $d'=\min(p_1,p_2,p_3)\leq (p_1p_2p_3^3)^{1/4}$. Then $\tau(d')^7 > \tau(p_1p_2p_3^3)$ and so $\tau(n) < \tau(d')^7 \times \tau(d_2)^7 = \tau(d'd_2)^7$.
		\item Suppose $\omega(m_2)=\omega(m_3)=1$. Write $n=m_1p_1^2p_2^3$. Let $d'=\min(p_1,p_2)\leq (p_1^2p_2^3)^{1/4}$. Then $\tau(d')^7 > \tau(p_1^2p_2^3)$ and so $\tau(n) < 4\tau(d_1) \times \tau(d')^7 \leq 4\tau(d_1d')^7$.
	\end{enumerate}
	
\item Suppose $\omega(m_1)=3$. 
	\begin{enumerate}[(i)]
		\item If $\omega(m_2)\geq 4$ and $\omega(m_3)\geq 4$ then $\tau(n) \leq 8\tau(d_1) \times \tau(d_2)^7 \times \tau(d_3)^7 \leq 8\tau(d_1d_2d_3)^7$.
		\item If $\omega(m_2)=1$ and $\omega(m_3)\geq 4$ write $n=p_1p_2p_3p_4^2m_3$. Let $d'=\min(\{p_i\})\leq (p_1p_2p_3p_4^2)^{1/4}$. Then $\tau(d')^7 > \tau(p_1p_2p_3p_4^2)$ and so $\tau(n) < \tau(d')^7 \times \tau(d_3)^7 = \tau(d'd_3)^7$.
		\item If $\omega(m_2)\geq 4$ and $\omega(m_3)=1$ write $n=p_1p_2p_3p_4^3m_2$. Let $d' = \min(\{p_i\}) \leq (p_1p_2p_3p_4^3)^{1/4}$. Then $\tau(d')^7 > \tau(p_1p_2p_3p_4^3)$ and so $\tau(n) < \tau(d')^7 \times \tau(d_2)^7 = \tau(d'd_2)^7$.
		\item If $\omega(m_2)=\omega(m_3)=1$ write $n=m_1 p_1^2p_2^3$. Let $d'=\min(p_1,p_2)\leq (p_1^2p_2^3)^{1/4}$. Then $\tau(d')^7 > \tau(p_1^2p_2^3)$ and so $\tau(n) < 8\tau(d_1) \times \tau(d')^7 \leq 8 \tau(d_1d')^7$.
	\end{enumerate}
\end{enumerate}

\section{Further speculation}

Returning to~\eqref{e81} one may consider for any $k\geq 2$ and $\eta \geq 1$ the generalised inequality
\begin{equation} \label{e91}
	\tau(n) \leq C_{k,\eta} \sum_{\substack{d|n \\ d\leq n^{1/k}}} \tau(d)^\eta.
\end{equation}
Clearly if~\eqref{e91} holds then it must also be true for any $\eta'>\eta$, in which case we may choose $C_{k,\eta'}=C_{k,\eta}$. Thus for fixed $k$ and $C_k=C_{k,\eta}$ we would like to know the smallest $\eta$ for which~\eqref{e91} holds. 

A natural question to consider in our case is whether Theorem~\ref{t1} can be improved to show that for all $n\geq 1$ there exists a divisor $d \leq n^{1/4}$ such that $\tau(n) \leq 8 \tau(d)^6$. It appears, however, that the purely elementary methods presented in this paper cannot achieve this in any practical sense. To see why consider a number $n=p_1^2 p_2^2 \dotsm p_{t_1}^2 q_1^3 q_2^3 \dotsm q_{t_2}^3$ with $t_1 \geq 4$ and $t_2\geq 4$. Suppose $p_1<p_2<\dots <p_{t_1}$ and $q_1<q_2<\dots <q_{t_2}$. Without additional assumptions on $n$ the best choice of divisor $d\leq n^{1/4}$ for which $\tau(d)$ is as large as possible is $d=p_1^2 p_2^2 \dotsm p_{[t_1/4]}^2 q_1^3 q_2^3 \dotsm q_{[t_2/4]}^3$. But then (cf.\ Lemma~\ref{l9})
\begin{equation*}
	\tau(d)^6 = 3^{6 \times \left[{t_1 \over 4}\right]} \times 4^{6 \times \left[{t_2 \over 4} \right]} \geq 3^{t_1-1} \times 4^{t_2-1} = 12^{-1} \tau(n).
\end{equation*}
Thus the best estimate we can produce unconditionally is $\tau(n) \leq 12 \tau(d)^6$. One may enumerate each of the various cases in regards to the relative sizes of the $p_i,q_j$ to produce a divisor $d$ with $\tau(d)$ large enough; this seems a formidable task in general.

In any case it remains an open problem to determine the smallest $\eta>0$ such that
\begin{equation} \label{e909}
	\tau(n) \ll_\eta \sum_{\substack{d|n \\ d \leq n^{1/4}}} \tau(d)^\eta.
\end{equation}
At least in the square-free case this problem has been solved. Iwaniec and Munshi~\cite{IwaniecMunshi2010} showed that~\eqref{e909} holds for square-free $n$ with any $\eta>3 \log 3/\log 2 - 4=0.75488\dots$, this lower bound being best possible.

\section{An application to Gaussian sequences}

Of significant interest in sieve theory is the detection of primes in Gaussian sequences, viz.\ sequences supported on integers which can be expressed as the sum of two squares. 

Here we consider a generalised Gaussian sequence $\mathcal A=(a_n)$ defined by
\begin{equation} \label{e321}
	a_n = \sum_{\substack{l^2+m^2=n \\ (l,m)=1}} \gamma_l,
\end{equation}
where $l,m$ run over positive integers and $\gamma_l$ are any complex numbers with $|\gamma_l| \leq 1$. We further suppose that the $\gamma_l$ are supported on $r$-th powers, i.e., $\gamma_l=0$ if $l \not= k^r$.

In the process of sieving $\mathcal A$ one requires good estimates for
\begin{equation} \label{e322}
	A_d(x) = \sum_{\substack{n\leq x \\ d|n}} a_n.
\end{equation}
It can be shown (see equations~(6) and~(7) in~\cite{FriedlanderIwaniec2007}) that 
\begin{equation*}
	\sum_{n\leq x} a_n = \sum_{l < \sqrt x} \gamma_l {\varphi(l) \over l} \sqrt{x-l^2} + O(x^{1 \over 2r} \log x ),
\end{equation*}
so for $d$ not too large we expect $A_d(x)$ to be uniformly well approximated by
\begin{equation*}
	M_d(x) = {\rho(d) \over d} \sum_{\substack{l<\sqrt x \\ (l,d)=1}} \gamma_l {\varphi(l) \over l} \sqrt{x-l^2},
\end{equation*}
where $\rho(d)$ is the number of solutions to the congruence $\nu^2+1 \equiv 0 \mod d$. 

To estimate~\eqref{e322} we may consider instead the smoothed sum
\begin{equation*}
	A_d(f) = \sum_{n \equiv 0 \bmod d} a_n f(n),
\end{equation*}
where $f \in C^\infty([0,\infty))$ is such that $f(t)=1$ if $0\leq t \leq (1-\kappa)x$ and $f(t)=0$ if $t\geq x$. Here $x^{-1/4r} \leq \kappa \leq 1$ is some parameter to be optimised later. 

\begin{prop} \label{p99}
Suppose $\sqrt x \leq D \leq x^{(r+1)/(2r)}$. Then
\begin{equation*}
	\sum_{d\leq D} |A_d(x) - A_d(f)| \ll \kappa x^{r+1 \over 2r} (\log x)^{128}.
\end{equation*}
\end{prop}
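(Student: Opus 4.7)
The plan is to reduce the smoothing error to a short divisor sum, apply Theorem~\ref{t2} to bound the divisor function pointwise, and then evaluate the resulting weighted sum via a standard lattice point count together with Mertens-type estimates.

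First, since $f\equiv 1$ on $[0,(1-\kappa)x]$ and $f\equiv 0$ on $[x,\infty)$, the function $\mathbf 1_{[0,x]}-f$ is supported in $((1-\kappa)x,x]$ and bounded by $1$, so
\[|A_d(x)-A_d(f)|\leq \sum_{\substack{d\mid n\\(1-\kappa)x<n\leq x}}|a_n|.\]
Swapping the order of summation and bounding $\#\{d\leq D:d\mid n\}$ by $\tau(n)$ gives
\[\sum_{d\leq D}|A_d(x)-A_d(f)|\leq \sum_{(1-\kappa)x<n\leq x}|a_n|\tau(n).\]

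Next I would apply Theorem~\ref{t2}, yielding $\tau(n)\ll\sum_{e\mid n,\,e\leq n^{1/4}}\tau(e)^7$, and invert the order of summation to reduce matters to bounding
\[\sum_{e\leq x^{1/4}}\tau(e)^7\sum_{\substack{(1-\kappa)x<n\leq x\\e\mid n}}|a_n|.\]
Using $|a_n|\leq\#\{(k,m):k^{2r}+m^2=n,\,(k^r,m)=1\}$, which comes from the support of $\gamma_l$ on $r$-th powers, the inner sum becomes a count of lattice points $(k,m)$ with $k^{2r}+m^2$ in the short interval $((1-\kappa)x,x]$ and divisible by $e$. The number of such $(k,m)$ in a residue class modulo $e$ is governed by $\rho(e)$, and a standard count should produce the inner sum as $O(\rho(e)e^{-1}\kappa x^{(r+1)/(2r)})$ plus an acceptable error throughout $e\leq x^{1/4}$.

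The problem then reduces to estimating the multiplicative sum $\sum_{e\leq x^{1/4}}\tau(e)^7\rho(e)/e$. Since $\tau(p)^7\rho(p)/p$ equals $256/p$ for primes $p\equiv 1\pmod 4$ and vanishes for $p\equiv 3\pmod 4$, a routine Euler product computation yields an exponent of $\tfrac12\cdot 256=128$, giving $O((\log x)^{128})$, which combined with the previous steps delivers the proposition. I expect the main obstacle to be the uniformity of the lattice point count in step three: for $e$ close to $x^{1/4}$ and for values of $k$ where $k^{2r}$ lies near the top of the interval, the $m$-intervals degenerate to lengths as small as $O(\sqrt{\kappa x})$, so care is needed to check that the equidistribution of $k^{2r}+m^2$ modulo $e$ produces the expected main term with an error that is dominated by $\kappa x^{(r+1)/(2r)}(\log x)^{128}$ after summation against $\tau(e)^7$.
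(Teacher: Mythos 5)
Your proposal follows essentially the same route as the paper's proof: reduce to a short-interval sum of $|a_n|\tau(n)$, apply Theorem~\ref{t2}, count lattice points in residue classes $m\equiv \nu l \pmod d$ with multiplicity $\rho(d)$, and evaluate $\sum_{d\leq x^{1/4}}\tau(d)^7\rho(d)/d\ll(\log x)^{128}$ via the Euler product with $\tau(p)^7\rho(p)=256$ on the primes $p\equiv 1\pmod 4$. The degeneracy you flag for $k^{2r}$ near the top of the interval is exactly what the paper handles by omitting the term with $l$ nearest to $\sqrt x$ (the primed sum), which contributes only $O(\sqrt x\log x)$.
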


\begin{proof}
A rearrangement of the sum gives
\begin{equation*}
\begin{aligned}
	\sum_{d\leq D} |A_d(x)-A_d(f)| &=
	\sum_{d\leq D} \Bigg| \sum_{\substack{(1-\kappa)x<n\leq x \\ d|n}}  \big( 1-f(n) \big) \sum_{\substack{l^2+m^2=n \\ (l,m)=1}} \gamma_l \bigg| \\
	&\ll \sum_{\substack{(1-\kappa)x<l^2+m^2\leq x \\ (l,m)=1}} |\gamma_l| \sum_{d|(l^2+m^2)} 1 \\
	&\ll \sideset{}{'}\sum_{\substack{(1-\kappa)x<l^2+m^2\leq x \\ (l,m)=1}} |\gamma_l| \tau(l^2+m^2) + \sqrt x \log x,
\end{aligned}
\end{equation*}
where $\sum{}^{'}$ means that the terms with a value of $l$ which is nearest to $\sqrt x$ are omitted. We deduce from Theorem~\ref{t2} that 
\begin{equation*}
	\sideset{}{'}\sum_{\substack{(1-\kappa)x<l^2+m^2\leq x \\ (l,m)=1}} |\gamma_l| \tau(l^2+m^2) \ll \sideset{}{'}\sum_{l<\sqrt x} |\gamma_l| \sum_{\substack{d \leq x^{1/4} \\ (d,l)=1}} \tau(d)^7 \sum_{\substack{(1-\kappa)x < l^2+m^2 \leq x \\ l^2+m^2 \equiv 0 \bmod d}} 1.
\end{equation*}
Now split the range of $m$ into residue classes $m \equiv \nu l \mod d$, where $\nu^2+1 \equiv 0 \mod d$. This, combined with the observation that $m$ runs over an interval of length $O(\kappa x/\sqrt{x-l^2})$, allows us to estimate the above by
\begin{equation*}
\begin{aligned}
	&\ll \kappa x \Bigg( \sum_{d\leq x^{1/4}} \tau(d)^7 {\rho(d) \over d} \Bigg) \Bigg( \sideset{}{'}\sum_{l<\sqrt x} {|\gamma_l| \over \sqrt{x-l^2}} \Bigg) + x^{{1 \over 4}+{1 \over 2r}} (\log x)^{128} \\
	&\ll \kappa x \times (\log x)^{128} \times x^{1-r \over 2r} + x^{{1 \over 4}+{1 \over 2r}} (\log x)^{128} \\
	&\ll \kappa x^{r+1 \over 2r} (\log x)^{128}.
\end{aligned}
\end{equation*}
\end{proof}

We now use Proposition~\ref{p99} to improve the error term in the main theorem of~\cite{FriedlanderIwaniec2007} by a factor of $O\big( (\log x)^{64.75} \big)$. 

\begin{thm}
Let $a_n$ and $A_d(x)$ be as in~\eqref{e321} and~\eqref{e322}, respectively. Suppose $\sqrt x \leq D \leq x^{(r+1)/(2r)}$. Then
\begin{equation*}
	\sum_{d\leq D} | A_d(x) - M_d(x) | \ll D^{1 \over 4} x^{3(r+1) \over 8r} (\log x)^{65.25}.
\end{equation*}
\end{thm}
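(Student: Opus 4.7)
The plan is to follow the three-piece decomposition from Friedlander--Iwaniec~\cite{FriedlanderIwaniec2007}, replacing their divisor-function input by Theorem~\ref{t2} and its consequence Proposition~\ref{p99}. Specifically, I would begin by applying the triangle inequality
\begin{equation*}
    |A_d(x)-M_d(x)| \leq |A_d(x)-A_d(f)| + |A_d(f)-M_d(f)| + |M_d(f)-M_d(x)|,
\end{equation*}
so that the problem splits into two ``smoothing'' discrepancies and one ``arithmetic'' discrepancy to be summed over $d\leq D$.

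For the first smoothing discrepancy, Proposition~\ref{p99} gives directly
\begin{equation*}
    \sum_{d\leq D}|A_d(x)-A_d(f)| \ll \kappa\, x^{(r+1)/(2r)}(\log x)^{128}.
\end{equation*}
For the third term I would run the same argument on the model: writing $M_d(f)-M_d(x)$ as $\tfrac{\rho(d)}{d}\sum_{(1-\kappa)x<l^2<x}\gamma_l\tfrac{\varphi(l)}{l}(1-f)$-type tails, interchanging the order of summation and invoking $\sum_{d\leq x^{1/4}}\tau(d)^7\rho(d)/d\ll(\log x)^{128}$ (which, via Theorem~\ref{t2}, is exactly the input already used in the proof of Proposition~\ref{p99}), one obtains a bound of the same shape $\ll\kappa x^{(r+1)/(2r)}(\log x)^{128}$.

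For the arithmetic discrepancy $\sum_{d\leq D}|A_d(f)-M_d(f)|$ I would quote the Poisson--summation-based estimate from~\cite{FriedlanderIwaniec2007}, which, for smooth $f$ supported on $[0,x]$ with derivatives of size $(\kappa x)^{-j}$, yields a bound of the form
\begin{equation*}
    \sum_{d\leq D}|A_d(f)-M_d(f)| \ll D^{1/2}x^{(r+1)/(4r)}\kappa^{-1/2}(\log x)^{2.5}.
\end{equation*}
Combining the three pieces and optimising the free parameter $\kappa$ by balancing the two contributions $\kappa(\log x)^{128}$ and $\kappa^{-1/2}D^{1/2}x^{-(r+1)/(4r)}(\log x)^{2.5}$ gives $\kappa\asymp D^{1/3}x^{-(r+1)/(6r)}(\log x)^{-125.5/1.5}$, and substituting back produces the stated error $D^{1/4}x^{3(r+1)/(8r)}(\log x)^{65.25}$. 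The admissibility condition $\kappa\geq x^{-1/(4r)}$ is exactly the reason for the upper bound $D\leq x^{(r+1)/(2r)}$.

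The only genuinely new ingredient is Theorem~\ref{t2}: every other step is already present in~\cite{FriedlanderIwaniec2007}. The main obstacle, therefore, is bookkeeping rather than mathematics---careful tracking of how the exponent $128$ (instead of the previous $2^8=256$) propagates through the optimisation, since the saving in $\eta$ from $8$ to $7$ halves the corresponding logarithmic exponent and the extra $1.25$ comes from the smooth-sieving factor $(\log x)^{2.5}$ that survives the square-root optimisation. The resulting net improvement of $(\log x)^{64.75}$ over the bound of~\cite{FriedlanderIwaniec2007} is purely a consequence of this substitution.
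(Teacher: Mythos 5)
Your overall strategy is exactly the paper's: split $|A_d(x)-M_d(x)|$ into the smoothing discrepancies and the arithmetic discrepancy, control the first with Proposition~\ref{p99}, import the other two bounds from Friedlander--Iwaniec, and optimise $\kappa$. The problem is in the middle term. The paper (quoting equation~(19) of~\cite{FriedlanderIwaniec2007}) uses
\begin{equation*}
	\sum_{d\leq D}|A_d(f)-M_d(f)| \ll \kappa^{-1} D^{1/2} x^{(r+1)/(4r)} (\log x)^{5/2},
\end{equation*}
with $\kappa^{-1}$, whereas you wrote $\kappa^{-1/2}$. This is not a cosmetic slip: your own optimisation is then internally inconsistent. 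Balancing $\kappa x^{(r+1)/(2r)}(\log x)^{128}$ against $\kappa^{-1/2}D^{1/2}x^{(r+1)/(4r)}(\log x)^{5/2}$ gives $\kappa \asymp D^{1/3}x^{-(r+1)/(6r)}(\log x)^{-251/3}$ and a final bound of order $D^{1/3}x^{(r+1)/(3r)}(\log x)^{133/3}$, which is not the stated $D^{1/4}x^{3(r+1)/(8r)}(\log x)^{65.25}$. The stated exponents $D^{1/4}$, $x^{3(r+1)/(8r)}$ and $65.25 = 128 + 5/4 - 64$ only emerge from balancing against a $\kappa^{-1}$ term, which forces $\kappa = D^{1/4}x^{-(r+1)/(8r)}(\log x)^{5/4-64}$ as in the paper. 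So you have asserted the correct conclusion but the computation you describe does not produce it; you need to correct the power of $\kappa$ in the Poisson-summation bound and redo the balancing.

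A smaller point: you attribute the constraint $D\leq x^{(r+1)/(2r)}$ to the requirement $\kappa\geq x^{-1/(4r)}$. With the correct choice of $\kappa$ it is essentially the other way around: the upper bound on $D$ keeps $\kappa\leq 1$, while the lower bound $D\geq\sqrt{x}$ is what guarantees $\kappa\geq x^{-1/(4r)}$.
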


\begin{proof}
We combine equations~(19) and (35) from~\cite{FriedlanderIwaniec2007} with Proposition~\ref{p99} above to obtain the estimate
\begin{equation*}
\begin{aligned}
	\sum_{d\leq D} | A_d(x) - M_d(x) | &\ll \sum_{d\leq D} |A_d(x)- A_d(f)| + \kappa^{-1} D^{1 \over 2} x^{r+1 \over 4r} (\log x)^{5 \over 2} + \kappa x^{r+1 \over 2r} \log x \\
	&\ll \kappa x^{r+1 \over 2r} (\log x)^{128} + \kappa^{-1} D^{1 \over 2} x^{r+1 \over 4r} (\log x)^{5 \over 2}.
\end{aligned}
\end{equation*}
Choosing 
\begin{equation*}
	\kappa = D^{1 \over 4} x^{-{r+1 \over 8r}} (\log x)^{{5 \over 4}-64}
\end{equation*}
yields the desired result.
\end{proof}



\end{document}